 \newtheorem{theorem}{Theorem}[section]
 \newtheorem{proposition}[theorem]{Proposition}
\begin{document}

\title{On Irreducible Polynomials of the Form $b(x^d)$}

\author{
  Palash Sarkar\\
    \small Indian Statistical Institute\\[-0.8ex]
    \small \texttt{palash@isical.ac.in}\\
  \and
  Shashank Singh\\
    \small Indian Statistical Institute\\[-0.8ex]
    \small \texttt{sha2nk.singh@gmail.com}
}

\date{}

\maketitle

\begin{abstract}
Let $p$ be a prime and $b(x)$ be an irreducible polynomial of degree $k$ over $\mathbb{F}_p$. Let $d\geq 1$ be an integer.
Consider the following question: Is $b(x^d)$ irreducible? We derive necessary conditions for $b(x^d)$ to be irreducible. Further,
when the necessary conditions are satisfied, we obtain the probability for $b(x^d)$ to be irreducible.
\end{abstract}

\section{Introduction\label{sec-intro}}
Irreducible polynomials over finite fields are of practical interest in areas such as coding theory and cryptography. 
Suppose $b(x)$ is an irreducible polynomial over $\mathbb{F}_p$ where $p$ is a prime and $d\geq 1$ is a positive integer.
We investigate the question of when $b(x^d)$ is irreducible over $\mathbb{F}_p$. This simple question turns out to have a 
surprisingly interesting answer which is provided in the rest of the paper.

One possible practical interest in the question arises from the need of obtaining sparse irreducible polynomials. The ability 
to move from an irreducible $b(x)$ to an irreducible $b(x^d)$ allows a simple way to obtain irreducible polynomials of 
progressively higher degrees. The question may be further investigated in contexts where it is directly relevant.

\section{Conditions for Irreducibility\label{sec-cond}}
We are interested in the polynomial $b(x^d)$. If $d$ is a multiple of the characteristic of the field, then clearly
$b(x^d)$ is reducible. We, however, do not need to consider this condition separately. Later, we will show that one of the
conditions that we derive imply this condition.

The following result relates the irreducibility of a polynomial to the structure of its roots.
\begin{proposition}\label{prop-irred}
A polynomial $g(x)$ of degree $t$ is irreducible over the field $\mathbb{F}_q$ if and only if its $t$
distinct roots are $\beta,\beta^q,\ldots,\beta^{q^{t-1}}$ for some non-zero $\beta\in\mathbb{F}_{q^t}$.
\end{proposition}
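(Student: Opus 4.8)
The plan is to run the standard Galois‑theoretic argument built on the Frobenius automorphism $\sigma\colon\alpha\mapsto\alpha^q$ of $\mathbb{F}_{q^t}$, which generates the cyclic group $\mathrm{Gal}(\mathbb{F}_{q^t}/\mathbb{F}_q)$ of order $t$, together with the fact that $\mathbb{F}_{q^i}$ is a subfield of $\mathbb{F}_{q^t}$ exactly when $i\mid t$. Throughout I tacitly assume $g(x)\neq x$: the degenerate case $g(x)=x$ has the single root $0$ and is the one exception to the stated equivalence, so effectively we assume $g(0)\neq 0$.

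For the ``only if'' direction, suppose $g$ is irreducible of degree $t$. Then $\mathbb{F}_q[x]/(g(x))$ is a field with $q^t$ elements; identifying it with $\mathbb{F}_{q^t}$, the residue class $\beta$ of $x$ is a root of $g$ in $\mathbb{F}_{q^t}$ whose minimal polynomial over $\mathbb{F}_q$ is $g$, so $\mathbb{F}_q(\beta)=\mathbb{F}_{q^t}$. Since $g$ has coefficients in $\mathbb{F}_q$, applying powers of $\sigma$ to $g(\beta)=0$ shows $\beta,\beta^q,\dots,\beta^{q^{t-1}}$ are all roots of $g$. These are pairwise distinct: if $\beta^{q^i}=\beta$ with $0<i<t$ then $\beta$ is fixed by $\sigma^i$, hence $\beta\in\mathbb{F}_{q^i}$ and $\mathbb{F}_{q^t}=\mathbb{F}_q(\beta)\subseteq\mathbb{F}_{q^i}$, which is impossible for $i<t$. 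As $\deg g=t$, these $t$ distinct elements are precisely the roots of $g$, and $\beta\neq 0$ under our running assumption.

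For the ``if'' direction, assume the $t$ distinct roots of $g$ are $\beta,\beta^q,\dots,\beta^{q^{t-1}}$ with $0\neq\beta\in\mathbb{F}_{q^t}$. Since $\beta^{q^t}=\beta$, the map $\sigma$ permutes this root set by a single $t$-cycle, so every elementary symmetric function of the roots is $\sigma$-invariant and therefore lies in $\mathbb{F}_q$; hence $\prod_{i=0}^{t-1}(x-\beta^{q^i})$ lies in $\mathbb{F}_q[x]$ and, up to a nonzero scalar from $\mathbb{F}_q$, equals $g$. Now let $h\in\mathbb{F}_q[x]$ be any monic irreducible factor of $g$; it has some root $\beta^{q^j}$, and repeating the orbit argument of the first part with $h$ in place of $g$, every $(\beta^{q^j})^{q^i}$ is a root of $h$. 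But the $\sigma$-orbit of $\beta^{q^j}$ is the whole set $\{\beta,\dots,\beta^{q^{t-1}}\}$, which has size exactly $t$ (the listed powers are distinct and $\sigma^t$ fixes $\beta$), so $\deg h\geq t=\deg g$ and thus $h$ equals $g$ up to a scalar, proving $g$ irreducible.

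The one place that needs genuine care, in both directions, is verifying that the Frobenius orbit of $\beta$ has size exactly $t$ rather than a proper divisor of $t$; this is precisely where the subfield criterion $\mathbb{F}_{q^i}\subseteq\mathbb{F}_{q^t}\Longleftrightarrow i\mid t$ does the real work. Everything else is routine bookkeeping with the action of $\sigma$ on roots and with elementary symmetric functions.
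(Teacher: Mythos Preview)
Your proof is correct and follows essentially the same Frobenius-orbit approach as the paper: for the ``if'' direction you take an irreducible factor and show it must contain the full $\sigma$-orbit of size $t$, while the paper phrases the same idea as deriving $t\mid s$ from $\gamma^{q^s}=\gamma$ for a root $\gamma=\beta^{q^i}$ of a putative degree-$s$ factor. The main difference is that you supply a full argument for the ``only if'' direction (and flag the degenerate case $g(x)=x$), whereas the paper simply cites this direction as a known fact.
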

\begin{proof}
If $g(x)$ is irreducible, then this is a known result. (See for example~\cite{LN94}.) The other side is also quite simple.
So, suppose the $t$ distinct roots of $g(x)$ in $\mathbb{F}_{q^t}$ are $\beta,\beta^q,\ldots,\beta^{q^{t-1}}$ for some 
$\beta\in\mathbb{F}_{q^t}$ with $\beta^{q^t}=\beta$. 
Let if possible, $a(x)$ be an irreducible factor of $g(x)$ over $\mathbb{F}_q$ with ${\rm deg}(a)=s<t$. Then the $s$ distinct roots of $a(x)$
in $\mathbb{F}_{q^s}$ are $\gamma,\gamma^q,\ldots,\gamma^{q^{s-1}}$ for some $\gamma\in\mathbb{F}_{q^s}$ with $\gamma^{q^s}=\gamma$. 
Since $a(x)$ is a factor of $g(x)$, we must have $\gamma=\beta^{q^i}$ for some $0\leq i<t$. Then the relation
$\gamma^{q^s}=\gamma$ leads to $\beta^{q^{s+i}}=\beta^{q^i}$ which shows that $s+i\equiv i\bmod t$ implying that $t|s$. Since $s<t$
this is not possible leading to a contradiction. So, $g(x)$ cannot have any irreducible factor $a(x)$ of degree less than $t$.
\end{proof}

For $b(x^d)$ to be irreducible, it is clearly necessary that $b(x)$ itself must be irreducible. We next characterise the condition under 
which $b(x^d)$ is also irreducible. 

\begin{proposition}\label{prop:red-criterion}
Let $\mathbb{F}_q$ be a field and suppose $b(x)$ is a polynomial of degree $m$ which is irreducible over $\mathbb{F}_{q}$. 
Let $\alpha$ be a root of $b(x)$ in $\mathbb{F}_{q^{m}}$. Then $b(x^d)$ is irreducible over $\mathbb{F}_{q}$ if and only if $x^d-\alpha$ 
is irreducible over $\mathbb{F}_{q^{m}}$.
\end{proposition}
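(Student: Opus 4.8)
The plan is to translate the irreducibility of $b(x^d)$ over $\mathbb{F}_q$ into a degree count in a tower of field extensions, exploiting the obvious fact that a root $\beta$ of $b(x^d)$ is the same thing as a $d$-th root of a root of $b$. I would first note that we may assume $b$ monic (irreducibility is unaffected), so $b(x^d)$ is monic of degree $md$; both directions then hinge on comparing the minimal polynomial of a suitable $\beta$ with $b(x^d)$.

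For the ``if'' direction, suppose $x^d-\alpha$ is irreducible over $\mathbb{F}_{q^m}$, and let $\beta$ be one of its roots in an algebraic closure. Then $\beta^d=\alpha$, so $b(\beta^d)=b(\alpha)=0$ and $\beta$ is a root of $b(x^d)$. I would compute $[\mathbb{F}_q(\beta):\mathbb{F}_q]$ via the tower $\mathbb{F}_q\subseteq\mathbb{F}_q(\alpha)\subseteq\mathbb{F}_q(\beta)$: the bottom step has degree $m$ since $b$ is irreducible, and because $\mathbb{F}_q(\alpha)=\mathbb{F}_{q^m}$ and $\mathbb{F}_q(\beta)\supseteq\mathbb{F}_q(\alpha)$ we have $\mathbb{F}_q(\beta)=\mathbb{F}_{q^m}(\beta)$, whose degree over $\mathbb{F}_{q^m}$ is $d$ because $x^d-\alpha$ is irreducible. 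Hence $\beta$ has degree $md$ over $\mathbb{F}_q$, so its minimal polynomial is a monic divisor of degree $md$ of the monic degree-$md$ polynomial $b(x^d)$; therefore $b(x^d)$ equals that minimal polynomial and is irreducible.

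For the ``only if'' direction, suppose $b(x^d)$ is irreducible and let $\beta$ be a root, so $\mathbb{F}_q(\beta)=\mathbb{F}_{q^{md}}$. Now $\beta^d$ is a root of the irreducible $b$, so its minimal polynomial over $\mathbb{F}_q$ is $b$ and $\mathbb{F}_q(\beta^d)$ is the field with $q^m$ elements, i.e.\ $\mathbb{F}_{q^m}$. The tower $\mathbb{F}_q\subseteq\mathbb{F}_q(\beta^d)\subseteq\mathbb{F}_q(\beta)$ then gives $[\mathbb{F}_{q^{md}}:\mathbb{F}_{q^m}]=d$, so $\beta$ has degree $d$ over $\mathbb{F}_{q^m}$; since $\beta$ is a root of $x^d-\beta^d\in\mathbb{F}_{q^m}[x]$, this degree-$d$ polynomial must be the minimal polynomial of $\beta$ over $\mathbb{F}_{q^m}$ and is therefore irreducible.

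The one step I expect to need a genuine (if short) argument rather than bookkeeping is passing from ``$x^d-\beta^d$ irreducible over $\mathbb{F}_{q^m}$'' to ``$x^d-\alpha$ irreducible over $\mathbb{F}_{q^m}$'' for the named root $\alpha$. Since $\alpha$ and $\beta^d$ are conjugate roots of $b$, there is $\sigma\in\mathrm{Gal}(\mathbb{F}_{q^m}/\mathbb{F}_q)$ (a power of the Frobenius $y\mapsto y^q$) with $\sigma(\alpha)=\beta^d$; applying $\sigma$ to coefficients is a ring automorphism of $\mathbb{F}_{q^m}[x]$ carrying $x^d-\alpha$ to $x^d-\beta^d$, so it preserves irreducibility and the claim follows. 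One could instead route the whole argument through Proposition~\ref{prop-irred} by displaying the Frobenius orbit of $\beta$ explicitly, but the tower-of-degrees computation seems the most economical, with this Galois-conjugacy reconciliation being the only real subtlety.
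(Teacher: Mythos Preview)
Your proof is correct and takes a genuinely different route from the paper. The paper argues both directions by explicitly tracking Frobenius orbits of roots: it factors $b(x^d)=\prod_{i=0}^{m-1}(x^d-\alpha^{q^i})$ over $\mathbb{F}_{q^m}$, and for the ``if'' direction shows that a root $\beta$ of $x^d-\alpha$ has the $md$ distinct conjugates $\beta,\beta^q,\ldots,\beta^{q^{md-1}}$, then invokes Proposition~\ref{prop-irred}; for the converse it takes an irreducible factor of $x^d-\alpha$ of degree $d_1<d$ and builds from its root an explicit degree-$md_1$ factor of $b(x^d)$. Your argument replaces all of this with a degree count in the tower $\mathbb{F}_q\subseteq\mathbb{F}_{q^m}\subseteq\mathbb{F}_q(\beta)$ and the minimal-polynomial characterisation of irreducibility, which is shorter and more conceptual (and works verbatim over any base field, not just a finite one). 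The paper's approach, by contrast, makes the root structure completely explicit and ties directly to Proposition~\ref{prop-irred}, so it may be more informative if one later wants to understand the factorisation of $b(x^d)$ when it is reducible. Your identification of the Galois-conjugacy step as the only genuine subtlety is apt; the paper sidesteps it in the converse by working with a factor of $x^d-\alpha$ for the \emph{given} $\alpha$ from the start, but your Frobenius-automorphism argument handles it cleanly.
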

\begin{proof}
We will require the basic result that a polynomial $g(x)$ of degree $t$ is irreducible over $\mathbb{F}_q$ if and only if its $t$
distinct roots are $\beta,\beta^q,\ldots,\beta^{q^{t-1}}$ for some non-zero $\beta\in\mathbb{F}_{q^t}$.

The $m$ roots of $b(x)$ in $\mathbb{F}_{q^{m}}$ are $\alpha, \alpha^q, \ldots,\alpha^{q^{m-1}}$. So over $\mathbb{F}_{q^m}$,
 \begin{eqnarray*}
  b(x)   &=& (x-\alpha)\times(x-\alpha^q)\times \ldots \times (x-\alpha^{q^{m-1}}); \\
  b(x^d) &=& (x^d-\alpha)\times(x^d-\alpha^q)\times \ldots \times (x^d-\alpha^{q^{m-1}}).
 \end{eqnarray*}

Now suppose that $(x^d-\alpha)$ is irreducible over $\mathbb{F}_{q^{m}}$. Let $\beta \in \mathbb{F}_{q^{md}}$ be a root of $(x^d-\alpha)$
and hence $\beta,\beta^{q^{m}},\ldots,\beta^{q^{(d-1)m}}$ are all the $d$ distinct roots of $(x^d-\alpha)$ in $\mathbb{F}_{q^{md}}$
and $\beta^{q^{md}}=\beta$.

Note that if $\gamma$ is a root of $(x^d-\alpha) \in \mathbb{F}_{q^{m}}[x]$, then $\gamma^{q^i}$ is a root of   $(x^d-\alpha^{q^i})$.
Hence the elements of the multiset 
\begin{eqnarray*}
S &=& \lbrace \beta^{ q^{ (jm+i) } } \mbox{ where } 0 \leq i \leq m-1 \mbox{ and }  0 \leq j \leq d-1 \rbrace \\
  &=& \lbrace \beta^{q^k} \mbox{ where } 0 \leq k \leq md-1 \rbrace
\end{eqnarray*}
are all roots of $b(x^d)$. The elements of $S$ are distinct, since for $0\leq k_1<k_2<md$, $\beta^{q^{k_1}}=\beta^{q^{k_2}}$ implies 
that $k_1\equiv k_2\bmod md$ which is not possible
for the given range of $k_1$ and $k_2$. Thus, $b(x^d) \in \mathbb{F}_{q}[x]$, with degree $md$, has $md$ distinct roots in 
$\mathbb{F}_{q^{md}}$ given by the elements of $S$. Using Proposition~\ref{prop-irred}, $b(x^d)$ is irreducible over 
$\mathbb{F}_{q}[x]$.

Conversely, suppose that $(x^d-\alpha)$ is reducible in $\mathbb{F}_{q^{m}}[x]$ and let $\tau(x)$ be a nontrivial irreducible factor of 
degree $d_1<d$.
The splitting field of $\tau(x)$ is $\mathbb{F}_{q^{md_1}}$. Let $\delta \in \mathbb{F}_{q^{md_1}}$ be a root of $\tau(x)$. All the distinct roots of $\tau(x)$
are $\delta, \delta^{q^m},\ldots, \delta^{q^{md_1-1}}$. They are also the roots of $(x^d-\alpha)$. Using the arguments similar 
to the proof of first part of the theorem, the elements of the set 
\begin{eqnarray*}
S_1 &=& \lbrace \delta^{ q^{ (jm+i) } } \mbox{ where } 0 \leq i \leq m-1 \mbox{ and }  0 \leq j \leq d_1-1 \rbrace \\
  &=& \lbrace \delta^{q^k} \mbox{ where } 0 \leq k \leq md_1-1 \rbrace
\end{eqnarray*}
are the distinct roots of $b(x^d)$ in $\mathbb{F}_{q^{md_1}}$. Let 
\begin{eqnarray*}
\pi(x)= \prod_{k=0}^{md_1-1} (x-\delta^{q^k}).
\end{eqnarray*}
The coefficient of $\pi(x)$ are the symmetric functions of $\delta, \delta^{q^m},\ldots, \delta^{q^{md_1-1}}$ in $\mathbb{F}_{q^{md_1}}$ and $\delta^{md_1}=\delta$.
So, the operation of raising to the power $q$ leaves the set $\{\delta, \delta^{q^m},\ldots, \delta^{q^{md_1-1}}\}$ and hence the symmetric
functions invariant. So these symmetric function are in $\mathbb{F}_{q}$. This implies that $\pi(x) \in \mathbb{F}_q[x]$. Since $md_1 < md$, $\pi(x)$ is a 
proper factor of $b(x^d)$ and so $b(x^d)$ is reducible.
\end{proof}

Proposition~\ref{prop:red-criterion} reduces the problem of determining irreducibility of the polynomial $b(x^d)$ to that of
determining the irreducibility of $x^d-\alpha$ for some root $\alpha$ of $b(x)$. The following result characterises the condition under 
which $x^d-\alpha$ will be irreducible~\cite{Va1895,Ca1901,Al02}.

\begin{proposition}[Vahlen-Capelli criterion]\label{prop:vahlen-capelli}
Let $F$ be an arbitrary field. Let $d$ be a positive integer and $\alpha\in F$. Then $x^d -\alpha$ is reducible in $F[x]$ if and only if 
one of the following two conditions hold.
\begin{enumerate}
\item There is a $\beta\in F$ such that $\alpha=\beta^{d^{\prime}}$ for some prime divisor $d^{\prime}$ of $d$; or,
\item there is a $\gamma$ in $F^{\star}$ such that $-4\alpha=\gamma^4$ whenever $4|d$.
\end{enumerate}
\end{proposition}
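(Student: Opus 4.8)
The plan is to prove the two implications separately: the ``if'' direction amounts to writing down explicit factorizations, and the ``only if'' direction is proved by induction on $d$.

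For the ``if'' direction, first suppose $\alpha=\beta^{d^{\prime}}$ with $d^{\prime}$ a prime divisor of $d$, and write $d=d^{\prime}e$ with $e<d$; substituting $x^{e}$ into the identity $y^{d^{\prime}}-\beta^{d^{\prime}}=(y-\beta)\bigl(y^{d^{\prime}-1}+\cdots+\beta^{d^{\prime}-1}\bigr)$ exhibits $x^{e}-\beta$ as a proper factor of $x^{d}-\alpha$. Now suppose $4\mid d$ and $-4\alpha=\gamma^{4}$ with $\gamma\in F^{\star}$; since $\gamma\neq 0$ this forces $\mathrm{char}\,F\neq 2$, so $\alpha=-\gamma^{4}/4$, and writing $d=4m$ the Sophie Germain identity $A^{4}+4B^{4}=(A^{2}-2AB+2B^{2})(A^{2}+2AB+2B^{2})$ with $A=x^{m}$ and $B=\gamma/2$ gives the proper factorization $x^{d}-\alpha=(x^{2m}-\gamma x^{m}+\gamma^{2}/2)(x^{2m}+\gamma x^{m}+\gamma^{2}/2)$.

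For the ``only if'' direction I argue by contraposition: assuming $\alpha\notin F^{p}$ for every prime $p\mid d$ and $\alpha\notin -4F^{4}$ whenever $4\mid d$, I show $x^{d}-\alpha$ is irreducible. The base case is $d$ prime: if $p$ is prime and $\alpha\notin F^{p}$, then $x^{p}-\alpha$ is irreducible --- for $p=2$ this is clear since a reducible quadratic has a root in $F$, and for odd $p$ the constant term $c$ of a monic proper factor, being up to sign a product of $j$ roots of $x^{p}-\alpha$ with $0<j<p$, satisfies $c^{p}=\pm\alpha^{j}$, whence, using $\gcd(j,p)=1$ and $\pm 1\in F^{p}$, a B\'ezout relation $uj+vp=1$ yields $\alpha\in F^{p}$, a contradiction. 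The general case reduces to prime powers: for a root $\beta$ of $x^{d}-\alpha$ and $d=\prod_{i}p_{i}^{r_{i}}$, the integers $d/p_{i}^{r_{i}}$ have no common prime factor, so $\beta$ lies in the compositum of the fields $F\bigl(\beta^{d/p_{i}^{r_{i}}}\bigr)$, and once each of these has degree $p_{i}^{r_{i}}$ over $F$, pairwise coprimality of the $p_{i}^{r_{i}}$ forces $[F(\beta):F]=d$ and hence irreducibility of $x^{d}-\alpha$. For $d=p^{r}$ I induct on $r$: put $E=F(\alpha^{1/p})$, which has degree $p$ over $F$ by the base case; then $\beta$ is a root of $x^{p^{r-1}}-\alpha^{1/p}$ over $E$, so the inductive hypothesis yields $[F(\beta):E]=p^{r-1}$ --- and hence $[F(\beta):F]=d$ --- provided one checks $\alpha^{1/p}\notin E^{q}$ for each prime $q\mid p^{r-1}$ and $\alpha^{1/p}\notin -4E^{4}$ when $4\mid p^{r-1}$.

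Verifying these descent conditions is the crux. For odd $p$ only $\alpha^{1/p}\notin E^{p}$ is at issue, and applying the norm $N_{E/F}$ to a hypothetical $\alpha^{1/p}=\mu^{p}$ gives $\alpha=N_{E/F}(\alpha^{1/p})\in F^{p}$ (using $N_{E/F}(\alpha^{1/p})=(-1)^{p+1}\alpha=\alpha$), a contradiction. For $p=2$ the norm only delivers $-\alpha\in F^{2}$, which is too weak; instead one substitutes $\mu=a+b\sqrt{\alpha}$ into $\mu^{2}=\sqrt{\alpha}$, obtains $2ab=1$ and $a^{2}+b^{2}\alpha=0$, and concludes $\alpha=-4a^{4}\in -4F^{4}$, contradicting the hypothesis (valid since $4\mid 2^{r}$); a similar but longer coordinate computation applied to a hypothetical $\alpha^{1/2}=-4\nu^{4}$ with $\nu\in E$ again yields $\alpha\in -4F^{4}$, settling the remaining sub-condition. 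I expect this $p=2$ descent through $F\subseteq F(\sqrt{\alpha})$ to be the main obstacle: the slick norm argument that handles odd primes discards precisely the information needed here, forcing an explicit calculation in $F(\sqrt{\alpha})$, and this is exactly what makes the separate ``$-4F^{4}$'' clause unavoidable in the statement. Finally, note that in characteristic $2$ that clause is vacuous, since $-4\alpha=\gamma^{4}$ with $\gamma\neq 0$ is impossible there, so no separate treatment of small characteristic is needed.
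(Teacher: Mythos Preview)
The paper does not actually prove this proposition: it is stated with citations to Vahlen, Capelli, and Albu and then used as a black box, so there is no ``paper's own proof'' to compare against.

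That said, your argument is correct and is essentially the classical proof. The ``if'' direction via the factorization $y^{d'}-\beta^{d'}$ and the Sophie Germain identity is standard. For the ``only if'' direction your structure---constant-term/B\'ezout argument for prime exponent, reduction to prime powers by coprimality of degrees in a compositum, and induction up the tower $F\subset F(\alpha^{1/p})\subset F(\beta)$ using the norm for odd $p$---is exactly the usual one. The delicate point you correctly isolate is the descent at $p=2$: the explicit computation for $\sqrt{\alpha}=\mu^{2}$ with $\mu=a+b\sqrt{\alpha}$ indeed gives $\alpha=-4a^{4}$, and the ``longer'' computation for $\sqrt{\alpha}=-4\nu^{4}$ also closes (writing $\nu^{2}=P+Q\sqrt{\alpha}$, one gets $P^{2}+Q^{2}\alpha=0$ and $-8PQ=1$, whence $\alpha=-s^{2}$ with $s=P/Q=-1/(8Q^{2})$, so $\alpha=-4(1/(4Q))^{4}\in -4F^{4}$). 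Your remark that the $-4F^{4}$ clause is vacuous in characteristic~$2$ is also right, so no separate small-characteristic analysis is needed.
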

The following simple condition for reducibility can be proved without using the Vahlen-Capelli criterion.
\begin{proposition}\label{prop-simple-irred}
Let $\mathbb{F}_{q^k}$ be a finite field and $d\geq 1$ be an integer such that there is a prime divisor $d^{\prime}$ of $d$ for
which $\gcd(q^k-1,d^{\prime})=1$. Then for every $\alpha\in\mathbb{F}_{q^k}^{\star}$ the polynomial $x^d-\alpha$ is reducible
over $\mathbb{F}_{q^k}$.
\end{proposition}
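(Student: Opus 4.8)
The plan is to use the coprimality hypothesis to show that \emph{every} element of $\mathbb{F}_{q^k}^\star$ is a $d'$-th power, and then to write down an explicit nontrivial factorization of $x^d-\alpha$; in particular the Vahlen-Capelli criterion is not needed.

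First I would prove that the $d'$-th power map is onto $\mathbb{F}_{q^k}^\star$. Since $\gcd(d',q^k-1)=1$, choose integers $u,v$ with $ud'+v(q^k-1)=1$. For any $\alpha\in\mathbb{F}_{q^k}^\star$ we then have $\alpha=\alpha^{ud'+v(q^k-1)}=(\alpha^u)^{d'}$, using $\alpha^{q^k-1}=1$. Hence, putting $\beta=\alpha^u\in\mathbb{F}_{q^k}^\star$, we obtain $\alpha=\beta^{d'}$. (Equivalently, one may simply invoke that on the cyclic group $\mathbb{F}_{q^k}^\star$ raising to a power coprime to the group order is a bijection.)

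Next, since $d'\mid d$, write $d=d'e$ with $e\geq 1$, substitute $Y=x^e$ into the elementary identity $Y^{d'}-\beta^{d'}=(Y-\beta)\bigl(Y^{d'-1}+\beta Y^{d'-2}+\cdots+\beta^{d'-1}\bigr)$, and obtain
\[
x^d-\alpha \;=\; (x^e-\beta)\bigl(x^{e(d'-1)}+\beta x^{e(d'-2)}+\cdots+\beta^{d'-1}\bigr),
\]
a factorization in $\mathbb{F}_{q^k}[x]$. It remains to check that both factors are proper: the first has degree $e$, and since $d'\geq 2$ we have $1\leq e<d$; the second has degree $e(d'-1)\geq 1$. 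So $x^d-\alpha$ is reducible over $\mathbb{F}_{q^k}$. I do not anticipate a genuine obstacle here: the only point that deserves a word of care is that $e\geq 1$ (so that $x^e-\beta$ is truly nonconstant), which is immediate from $e=d/d'$ and $d\geq 1$; the real content of the statement is the surjectivity of the $d'$-th power map established in the second step, which is exactly where the coprimality assumption is used.
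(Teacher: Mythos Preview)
Your proof is correct and follows essentially the same approach as the paper: both show that every $\alpha\in\mathbb{F}_{q^k}^\star$ is a $d'$-th power (you via B\'ezout, the paper via a cyclic generator), and then exhibit $x^{d/d'}-\beta$ as a proper factor of $x^d-\alpha$. The only cosmetic differences are that you write out the cofactor explicitly and verify the degrees, whereas the paper simply names the factor.
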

\begin{proof}
Let $\beta$ be a generator of $\mathbb{F}_{q^k}^{\star}$. Since $d^{\prime}$ is co-prime to $q^k-1$, it follows that $\beta^{d^{\prime}}$ 
is also a generator of $\mathbb{F}_{q^k}^{\star}$. So, there is an $i$ such that $\alpha=(\beta^{d^{\prime}})^i$. Then we have 
$\alpha=\gamma^{d^{\prime}}$ where $\gamma=\beta^i$. In this case, the polynomial $(x^{d/d^{\prime}}-\gamma)$ is a factor
of $x^d-\alpha$.
\end{proof}
Let the characteristic of $\mathbb{F}_{q^k}$ be $p$. If $p|d$, then taking $d^{\prime}=p$, we have $\gcd(p,q^k-1)=1$ and
so $x^d-\alpha$ is reducible for every $\alpha\in\mathbb{F}_{q^k}^{\star}$. In particular, $b(x^d)$ is reducible. This, of course,
can be seen directly since $b(x^d)=(b(x^{\ell}))^p$ where $d=p\ell$.

We next prove a result on reducibility which follows from the Vahlen-Capelli criterion.
\begin{proposition}\label{prop:4-nec-cond}
Let $p$ be a prime congruent to $3$ modulo $4$, $d$ be a positive integer such that $4$ divides $d$ and $k$ be an odd positive
integer. Then for every $\alpha\in\mathbb{F}_{p^k}$, the polynomial $x^d-\alpha$ is reducible over $\mathbb{F}_{p^k}$.
\end{proposition}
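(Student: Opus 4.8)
The plan is to apply the Vahlen--Capelli criterion (Proposition~\ref{prop:vahlen-capelli}) and show that, for every $\alpha\in\mathbb{F}_{p^k}$, at least one of its two reducibility conditions for $x^d-\alpha$ is satisfied. The case $\alpha=0$ is immediate, since $x^d-0=x\cdot x^{d-1}$ is reducible (alternatively, $0=0^2$ and $2\mid d$), so I would assume from now on that $\alpha\in\mathbb{F}_{p^k}^{\star}$.

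First I would pin down the structure of the unit group. Because $p\equiv 3\bmod 4$ and $k$ is odd, $p^k\equiv 3\bmod 4$, so $|\mathbb{F}_{p^k}^{\star}|=p^k-1=2m$ with $m$ odd. Since this group is cyclic, the squares form its unique subgroup of order $m$; and because $\gcd(4,2m)=2$, the fourth powers likewise form the unique subgroup of order $m$. Hence in $\mathbb{F}_{p^k}^{\star}$ \emph{the fourth powers coincide with the squares}. I would also record the standard fact that $-1$ is a non-square in $\mathbb{F}_{p^k}$ (as $p^k\equiv 3\bmod 4$), so that $-4=(-1)\cdot 2^2$ is a non-square.

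Then I would split into two cases according to whether $\alpha$ is a square. If $\alpha$ is a square in $\mathbb{F}_{p^k}^{\star}$, then $\alpha=\beta^2$ for some $\beta$, and since $2$ is a prime divisor of $d$, condition~1 of the Vahlen--Capelli criterion applies and $x^d-\alpha$ is reducible. If $\alpha$ is a non-square, then $-4\alpha$ is a product of two non-squares, hence a square, hence by the coincidence noted above a fourth power, say $-4\alpha=\gamma^4$ with $\gamma\neq 0$; since $4\mid d$, condition~2 applies and again $x^d-\alpha$ is reducible. These two cases exhaust all $\alpha$, completing the argument.

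The only genuine content is the observation that $\gcd(4,p^k-1)=2$, which forces fourth powers and squares to coincide in $\mathbb{F}_{p^k}^{\star}$; everything else is routine bookkeeping with the two Vahlen--Capelli conditions together with the elementary fact that $-1$ is a non-square when $p^k\equiv 3\bmod 4$. I do not anticipate a real obstacle here, only the need to handle the boundary value $\alpha=0$ and to confirm that $2$ and (when relevant) $4$ are the prime-power divisors of $d$ that make the criterion bite.
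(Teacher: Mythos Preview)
Your proof is correct and follows the same overall strategy as the paper: apply the Vahlen--Capelli criterion, treat squares via condition~1 with $d'=2$, and for non-squares show that $-4\alpha$ is a fourth power so condition~2 applies. The difference lies in how the fourth-power claim is established. The paper fixes a generator $\beta$ of $\mathbb{F}_{p^k}^{\star}$, writes $-4=\beta^{t_2}$ and $\alpha=\beta^{t_1}$ with $t_1,t_2$ odd, and then does a case analysis on $t_1+t_2\bmod 4$, using that $p^k-1\equiv 2\bmod 4$ to adjust the exponent into a multiple of~$4$. Your route is more conceptual and shorter: from $\gcd(4,p^k-1)=2$ you deduce that the subgroup of fourth powers coincides with the subgroup of squares, so it suffices to check that $-4\alpha$ is a square, which is immediate since it is a product of two non-squares. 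This structural observation replaces the paper's explicit exponent bookkeeping and avoids the two-case split entirely; it also makes transparent \emph{why} the hypotheses $p\equiv 3\bmod 4$ and $k$ odd are exactly what is needed.
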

\begin{proof}
Let $F=\mathbb{F}_{p^{k}}$ and $\beta$ be a generator of $F^{\star}$. Let $q=p^{k}$. Since $4|d$,
from the Vahlen-Capelli criterion, $x^d-\alpha$ is reducible if either $\alpha=\gamma^{d^{\prime}}$ for some divisor 
$d^{\prime}$ of $d$ and some element $\gamma$ of $F$; or, $-4\alpha=\delta^4$ for some $\delta\in F^{\star}$.
We show that for odd $k$, either $\alpha=\gamma^2$ (taking $d^{\prime}=2$) or, $-4\alpha=\delta^4$. This is achieved
by showing that for odd $k$, if $\alpha$ is a quadratic non-residue in $F^{\star}$, then $-4\alpha=\delta^4$.

So, assume that $\alpha$ is quadratic non-residue in $F^{\star}$, i.e., $\alpha=\beta^{t_1}$, where $t_1$ is odd and less than $q-1$.

An element $\beta^i$ is in $\mathbb{F}_{p}$ if and only if $\beta^{ip}=\beta^i$, i.e., if and only if,
$(p^{k}-1)|i(p-1)$. Let $M=(p^{k}-1)/(p-1)$. Then $\beta^i$ is in $\mathbb{F}_{p}$ if and only if $i$ is a multiple of
$M$. Also, since $k$ is odd, $M=(1+p+p^2+\cdots+p^{k-1})$ is odd. 

Since $-4\in\mathbb{F}_p$, we can write $-4=\beta^{Mj}$ for some $j$ in $\{0,\ldots,p-1\}$. Note that $4$ is a quadratic
residue in $\mathbb{F}_p$ and since $p\equiv 3\bmod 4$, $-1$ is a quadratic non-residue in $\mathbb{F}_p$. So,
$-4$ is a quadratic non-residue in $\mathbb{F}_p$. Since $M$ is odd and $-4$ is a quadratic non-residue, we have that the $j$ 
in $-4=\beta^{Mj}$ must also necessarily be odd. So we can write  $-4=\beta^{t_2}$, where $t_2$ is odd and is less than $q-1$.
Now we have,
\begin{eqnarray*}
 -4\alpha=\beta^{(t_1+t_2)\bmod (q-1)}
\end{eqnarray*}
Since $t_1$ and $t_2$ are odd, $t_1+t_2$ is congruent to either $0$ or $2$ modulo $4$. We also have $t_1+t_2 < 2(q-1)$.
\newline
{\bf Case~1:} Suppose that $t_1+t_2$ is congruent to $0$ modulo $4$. Then $t_1+t_2=4t$ where $t<q-1$. Thus,
	$-4\alpha = \beta^{4t}=\delta^4$ where $\delta=\beta^t$.
\newline
{\bf Case~2:} Suppose $t_1+t_2$ is equal to $2$ modulo $4$. Since $p\equiv 3\bmod 4$ and $k$ is odd, $q-1\equiv 2\bmod 4$ and so
$t_1+t_2+q-1$ is equal to $0$ modulo $4$. Then we can write, $t_1+t_2+q-1=4t^\prime$ where $t^\prime <q-1$. Again, we
can write $-4\alpha=\delta^4$ where $\delta=\beta^{t^{\prime}}$.
\end{proof}

The above conditions are sufficient to ensure reducibility of $x^d-\alpha$ and so if $x^d-\alpha$ is irreducible, then the negation
of the stated conditions must hold. We next prove the sufficiency of these conditions.

\begin{proposition}\label{prop-suff}
Let $p$ be a prime, $d,k\geq 1$ be integers. Consider the following condition.
$$\left.
\mbox{
\begin{minipage}{300pt}
Every prime divisor $d^{\prime}$ of $d$ divides $p^k-1$; \\
and \\
either ($4$ does not divide $d$);  \\
or, ($4|d$ and ($p\equiv 1\bmod 4$ or, $k$ is even)).
\end{minipage}
}
\right\}\eqno{(*)}
$$
Let $\mathbb{F}^{\star}_{p^k}=\langle\beta\rangle$. For prime $d^{\prime}$ dividing $d$, define 
$$S_{d^{\prime}} = \{\beta^{d^{\prime}j}: j=1,\ldots,(p^k-1)/d^{\prime}\}$$ and set
$$\displaystyle S=\bigcup_{\mbox{prime }d^{\prime}|d} S_{d^{\prime}}.$$
If $(*)$ holds, then for every $\alpha$ in $\mathbb{F}_{p^k}^{\star}\setminus S$, $x^d-\alpha$ is irreducible.

Consquently, under condition $(*)$, for a randomly chosen $\alpha$ from $\mathbb{F}_{p^k}$, the probability that $x^d-\alpha$
is irreducible is at least
$$\displaystyle 1-\sum_{\mbox{prime }d^{\prime}|d}\frac{1}{d}.$$
\end{proposition}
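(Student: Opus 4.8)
The plan is to reduce the irreducibility of $x^d-\alpha$ to the Vahlen--Capelli criterion (Proposition~\ref{prop:vahlen-capelli}) and to verify that, under $(*)$, neither of its two reducibility conditions can hold as soon as $\alpha$ lies outside $S$. The first step is to identify the sets $S_{d'}$ concretely. For a prime $d'\mid d$, condition $(*)$ gives $d'\mid p^k-1$, hence $\gcd(d',p^k-1)=d'$, so $\beta^{d'}$ has order exactly $(p^k-1)/d'$ in $\mathbb{F}_{p^k}^{\star}$ and $S_{d'}=\langle\beta^{d'}\rangle$ is precisely the subgroup of $d'$-th powers of $\mathbb{F}_{p^k}^{\star}$. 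Since $\alpha\neq 0$, the first Vahlen--Capelli condition ``$\alpha=\gamma^{d'}$ for some prime divisor $d'$ of $d$'' holds if and only if $\alpha\in S_{d'}$ for some such $d'$, i.e.\ if and only if $\alpha\in S$. Hence for $\alpha\in\mathbb{F}_{p^k}^{\star}\setminus S$ this first condition already fails.

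It remains to rule out the second Vahlen--Capelli condition, which is relevant only when $4\mid d$, so assume $4\mid d$. Then $2\mid d$, and the first clause of $(*)$ forces $2\mid p^k-1$; in particular $p$ is odd, so the quadratic-residue machinery in $\mathbb{F}_{p^k}^{\star}$ is available. Moreover $2\mid d$ gives $\alpha\notin S_2$, i.e.\ $\alpha$ is a quadratic non-residue. Suppose, for contradiction, that $-4\alpha=\gamma^{4}$ for some $\gamma\in\mathbb{F}_{p^k}^{\star}$. Then $-4\alpha$ is a square, and since $4$ is a square we get $-\alpha=(\gamma^{2}/2)^{2}$, so $-\alpha$ is a square. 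Thus $\alpha$ is a non-residue while $-\alpha$ is a residue, which forces $-1$ to be a quadratic non-residue, equivalently $p^{k}\equiv 3\bmod 4$. But the $4\mid d$ clause of $(*)$ excludes exactly the case ``$p\equiv 3\bmod 4$ and $k$ odd'', and for odd $p$ one has $p^{k}\equiv 3\bmod 4$ only in that case: if $p\equiv 1\bmod 4$ then $p^{k}\equiv 1\bmod 4$, and if $k$ is even then $p^{k}$ is the square of an odd integer, hence $\equiv 1\bmod 8$. This contradiction shows $-4\alpha$ is not a fourth power, so the second condition fails too, and Proposition~\ref{prop:vahlen-capelli} gives that $x^d-\alpha$ is irreducible.

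For the probability statement I would simply count. Each $S_{d'}$ has $(p^k-1)/d'$ elements, so a union bound yields $|S|\le\sum_{\text{prime }d'\mid d}(p^k-1)/d'$, and therefore the number of $\alpha\in\mathbb{F}_{p^k}^{\star}$ for which $x^d-\alpha$ is irreducible is at least $(p^k-1)\bigl(1-\sum_{\text{prime }d'\mid d}1/d'\bigr)$; dividing by the size of the sample space gives the claimed lower bound.

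I expect the fourth-power case to be the main obstacle. It is the only place where the exact form of the $4\mid d$ clause of $(*)$ is used, and the argument hinges on correctly rephrasing ``$-4\alpha$ is a fourth power'' as a statement about quadratic residues and then lining it up with the congruence conditions hidden in $(*)$ (for instance noticing that $k$ even already forces $p^k\equiv 1\bmod 8$). Identifying $S_{d'}$ with the group of $d'$-th powers and the concluding count are routine by comparison.
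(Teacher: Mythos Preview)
Your proof is correct and follows essentially the same route as the paper: both invoke the Vahlen--Capelli criterion, identify $S_{d'}$ with the set of $d'$-th powers so that $\alpha\notin S$ disposes of the first reducibility condition, and then handle the $4\mid d$ case via quadratic residues, finishing with a union bound for the probability estimate. The only cosmetic difference is in the $4\mid d$ step: the paper argues directly that under $(*)$ the element $-4$ is a quadratic residue in $\mathbb{F}_{p^k}$ (so $\alpha$ a non-residue forces $-4\alpha$ a non-residue, hence not a fourth power), whereas you argue contrapositively that $-4\alpha=\gamma^4$ would make $-1$ a non-residue and hence $p^k\equiv 3\bmod 4$, contradicting $(*)$; these are the same idea.
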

\begin{proof}
The argument is based on the Vahlen-Capelli criterion. 

Consider the condition ($p\equiv 1\bmod 4$ or, $k$ is even). Under this condition, $-4$ is a quadratic residue in $\mathbb{F}_{p^k}$.
This is seen as follows. If $p\equiv 1\bmod 4$, then $-1$ is a quadratic residue in $\mathbb{F}_p$; also, $4$ is a QR and so
$-4$ is a QR in $\mathbb{F}_p$ and hence a QR in $\mathbb{F}_{p^k}$. If $k$ is even, then $M=(p^k-1)/(p-1)=1+p+\cdots+p^{k-1}$ is even;
$-4=\beta^{Mj}$ for some $j\in\{0,\ldots,p-1\}$ and so $-4$ is a QR in $\mathbb{F}_{p^k}$. 

So, under the condition ($p\equiv 1\bmod 4$ or, $k$ is even), if $\alpha$ is a QNR in $\mathbb{F}_{p^k}$, then $-4\alpha$ is also
a QNR in $\mathbb{F}_{p^k}$. Consequently, $-4\alpha$ cannot have a fourth root in $\mathbb{F}_{p^k}$. 

Suppose that $4|d$ and ($p\equiv 1\bmod 4$ or, $k$ is even). If $\alpha$ is a QNR in $\mathbb{F}_{p^k}$, then $-4\alpha$ does not
have a fourth root in $\mathbb{F}_{p^k}$. So, under the condition $4|d$ and ($p\equiv 1\bmod 4$ or, $k$ is even), to show that
$x^d-\alpha$ is irreducible, it is enough to consider only the first condition of the Vahlen-Capelli criterion. On the other hand,
if $4$ does not divide $d$, then the second condition of the Vahlen-Capelli criterion does not apply. As a result, under condition
$(*)$, it is sufficient to consider the first condition of the Vahlen-Capelli criterion.

Suppose that every prime divisor $d^{\prime}$ of $d$ divides $p^k-1$ and let $\alpha\in\mathbb{F}_{p^k}^{\star}\setminus S$.
Then $\alpha$ does not have a $d^{\prime}$-th root in $\mathbb{F}_{p^k}$ for any prime divisor $d^{\prime}$ of $d$. To see this,
suppose that if possible, $\alpha$ has a $d_1$-th root $\gamma$ for some prime divisor $d_1$ of $d$. By the given condition,
$d_1|p^k-1$. Since $\alpha$ is
in $\mathbb{F}_{p^k}^{\star}\setminus S$, we can write $\alpha=\beta^i$ where $i$ is not divisible by any prime divisor $d^{\prime}$
of $d$. By assumption, $\alpha$ has a $d_1$-th root and so we can write $\alpha=\gamma^{d_1}=\beta^{jd_1}$ where $\gamma=\beta^j$
for some $j\in\{0,\ldots,p^k-2\}$. Then $\beta^i=\beta^{jd_1}$ and so $i\equiv jd_1\bmod (p^k-1)$. Since $d_1|p^k-1$, it follows 
that $d_1$ must also divide $i$ which contradicts the assumption that $i$ is not a multiple of any prime divisor of $d$.

By the above argument, it follows that under condition $(*)$, $x^d-\alpha$ is irreducible for every 
$\alpha\in\mathbb{F}_{p^k}^{\star}\setminus S$. This proves the first part of the result.

The cardinality of $S_{d^{\prime}}$ is clearly $(p^k-1)/d^{\prime}$ and so the probability that a random $\alpha$ from 
$\mathbb{F}_{p^k}$ is in $S_{d^{\prime}}$ is $1/d^{\prime}$. The exact probability that under condition $(*)$, $x^d-\alpha$
is irreducible for a random $\alpha$ is $1-\#S/(p^k-1)$. The expression for $\#S$ is obtained using inclusion-exclusion
and the stated lower bound is obtained by simply applying the union rule for probabilities.
\end{proof}

The irreducibility of $x^d-\alpha$ for prime $d$ is stated in the following result.
\begin{proposition}\label{prop-d}
Let $p$ be a prime, $k\geq 1$ be an integer, $d$ is a prime such that $d$ divides $p^k-1$. Let $\alpha$ be a random element 
of $\mathbb{F}_{p^k}$. Then the probability that $x^d-\alpha$ is irreducible over $\mathbb{F}_{p^k}$ is $1-1/d$.
\end{proposition}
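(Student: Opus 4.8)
The plan is to obtain Proposition~\ref{prop-d} as the exact, two-sided version of Proposition~\ref{prop-suff} in the case where the exponent $d$ is prime. First I would note that condition $(*)$ of Proposition~\ref{prop-suff} holds automatically under the present hypotheses: the only prime divisor of $d$ is $d$ itself, and $d\mid p^k-1$ is assumed; moreover $4$ cannot divide the prime $d$, so the second clause of $(*)$ is satisfied as well. Hence Proposition~\ref{prop-suff} applies. Writing $\mathbb{F}^{\star}_{p^k}=\langle\beta\rangle$, its set $S$ reduces to the single set $S_d=\{\beta^{dj}:j=1,\ldots,(p^k-1)/d\}$, i.e.\ the set of nonzero $d$-th powers in $\mathbb{F}_{p^k}$, and Proposition~\ref{prop-suff} tells us that $x^d-\alpha$ is irreducible over $\mathbb{F}_{p^k}$ for every $\alpha\in\mathbb{F}^{\star}_{p^k}\setminus S_d$.

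Next I would prove the converse: $x^d-\alpha$ is reducible for every $\alpha$ that is a $d$-th power in $\mathbb{F}_{p^k}$. If $\alpha=0$, this is immediate since $x^d-\alpha=x^d$ and $d\geq 2$. If $\alpha=\gamma^d$ with $\gamma\in\mathbb{F}^{\star}_{p^k}$, then $x-\gamma$ is a nontrivial factor of $x^d-\gamma^d$. (This is just the first case of the Vahlen--Capelli criterion, Proposition~\ref{prop:vahlen-capelli}, with $d^{\prime}=d$; the second case does not arise because $4\nmid d$.) Combining the two directions, $x^d-\alpha$ is irreducible over $\mathbb{F}_{p^k}$ if and only if $\alpha\in\mathbb{F}^{\star}_{p^k}\setminus S_d$; in particular it is never irreducible when $\alpha=0$.

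Finally I would count. Since $d\mid p^k-1$, the homomorphism $y\mapsto y^d$ on the cyclic group $\mathbb{F}^{\star}_{p^k}$ has kernel of size $\gcd(d,p^k-1)=d$, so its image $S_d$ has exactly $(p^k-1)/d$ elements, as already recorded in Proposition~\ref{prop-suff}. Following the normalization used there (probability computed as $1-\#S/(p^k-1)$), the probability that a random $\alpha$ yields an irreducible $x^d-\alpha$ is therefore $1-\#S_d/(p^k-1)=1-\bigl((p^k-1)/d\bigr)/(p^k-1)=1-1/d$.

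I do not anticipate a genuine obstacle: the content is entirely a specialization of Proposition~\ref{prop-suff} together with the elementary reducibility direction for $d$-th powers, and the probability is a one-line count of $d$-th powers in a cyclic group. The only points needing a moment's care are verifying that $(*)$ is automatic for prime $d$ (so that Proposition~\ref{prop-suff} can be invoked as a black box) and making the characterization two-sided, so that the inequality of Proposition~\ref{prop-suff} becomes an equality.
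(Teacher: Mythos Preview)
Your proposal is correct and follows essentially the same approach as the paper: verify that condition $(*)$ of Proposition~\ref{prop-suff} holds automatically when $d$ is prime (the only prime divisor of $d$ is $d$ itself, $d\mid p^k-1$ by hypothesis, and $4\nmid d$), and then invoke Proposition~\ref{prop-suff}. The paper's proof is terser, citing Proposition~\ref{prop-suff} directly for the value $1-1/d$ without spelling out the converse direction or the count, whereas you make both explicit; the substance is the same.
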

\begin{proof}
Since $d$ is prime its only prime divisor is $d$ itself and by the given condition $d$ divides $p^k-1$. Also, $4$ does not
divide $d$. Hence, by Proposition~\ref{prop-suff}, the probability that $x^d-\alpha$ is irreducible is $1-1/d$.
\end{proof}

The special case of $d=2$ is of interest.
\begin{proposition}\label{prop-d=2}
Let $p$ be a prime and $k\geq 1$ be an integer. Let $\alpha$ be a random element of $\mathbb{F}_{p^k}$. Then the probability that 
$x^2-\alpha$ is irreducible over $\mathbb{F}_{p^k}$ is $1/2$.
\end{proposition}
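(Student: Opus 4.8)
The plan is to treat this as the $d=2$ instance of Proposition~\ref{prop-d}. When $p$ is odd, $p^k$ is odd, hence $2\mid p^k-1$; since $2$ is prime and $4$ does not divide $d=2$, condition $(*)$ of Proposition~\ref{prop-suff} is satisfied with $d=2$, and Proposition~\ref{prop-d} then gives directly that the probability is $1-1/2=1/2$. So in odd characteristic there is essentially nothing to prove beyond invoking the earlier result, and I would present it exactly that way.

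If a self-contained argument is wanted instead, I would use the elementary fact that a quadratic polynomial over any field is reducible if and only if it has a root in that field: thus $x^2-\alpha$ is irreducible over $\mathbb{F}_{p^k}$ precisely when $\alpha$ is a non-square. For odd $p$ the group $\mathbb{F}_{p^k}^{\star}$ is cyclic of even order, so the squaring homomorphism on it is two-to-one; its image (the squares) and the complementary coset (the non-squares) each have size $(p^k-1)/2$. Hence a uniformly random nonzero $\alpha$ is a non-square with probability exactly $1/2$, which is the claim. This route sidesteps the Vahlen-Capelli criterion entirely, which is overkill for $d=2$.

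The only point requiring care — the thing I would flag as the genuine obstacle — is the characteristic-two boundary case, together with the role of $\alpha=0$. For $p=2$ the Frobenius map $x\mapsto x^2$ is a bijection of $\mathbb{F}_{2^k}$, so every $\alpha$ is a square and $x^2-\alpha=(x+\alpha^{2^{k-1}})^2$ is reducible for all $\alpha$; thus the probability is $0$, and the statement must be read with the standing assumption that $p$ is odd, consistent with the hypothesis $d\mid p^k-1$ of Proposition~\ref{prop-d}. Likewise $\alpha=0$ gives $x^2=x\cdot x$, so strictly the probability over all of $\mathbb{F}_{p^k}$ is $(p^k-1)/(2p^k)$; the value $1/2$ is the probability over $\mathbb{F}_{p^k}^{\star}$, equivalently up to the negligible $1/(2p^k)$ correction. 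I expect the intended proof simply to quote Proposition~\ref{prop-d}.
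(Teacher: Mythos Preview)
Your primary approach---verifying that condition $(*)$ of Proposition~\ref{prop-suff} holds for $d=2$ (the only prime divisor $2$ divides $p^k-1$ when $p$ is odd, and $4\nmid 2$) and then reading off the probability $1-1/2$---is exactly what the paper does, except that it cites Proposition~\ref{prop-suff} directly rather than routing through Proposition~\ref{prop-d}. Your remarks about the implicit odd-$p$ assumption and the $\alpha=0$ correction are accurate and are indeed glossed over in the paper's proof.
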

\begin{proof}
Here $d=2$ and its only prime divisor is $2$ itself. Since $p$ is odd, $p^k-1$ is even and so $2$ divides $p^k-1$. Also, $4$ does not
divide $d$. Hence, by Proposition~\ref{prop-suff}, the probability that $x^d-\alpha$ is irreducible is $1/2$.
\end{proof}
Further applying Proposition~\ref{prop-suff}, we can deduce the following facts.
\begin{enumerate}
\item Suppose $d=4$, $p$ is an odd prime and (either $p\equiv 1\bmod 4$ or $k$ is even). Then for a random element $\alpha$ of 
$\mathbb{F}_{p^k}$, the polynomial $x^d-\alpha$ is irreducible with probability $1/2$. 
\item Suppose $d=6$ and $p$ is an odd prime such that $3$ divides $p^k-1$. Then for a random element $\alpha$ of 
$\mathbb{F}_{p^k}$, the polynomial $x^d-\alpha$ is irreducible with probability at least $1/6$. 
\end{enumerate}

\end{document}